\newtheorem{theorem}{Theorem}[section]
\newtheorem{proposition}[theorem]{Proposition}
\newtheorem{lemma}[theorem]{Lemma}
\newtheorem{corollary}[theorem]{Corollary} 
\theoremstyle{definition}
\newtheorem{definition}[theorem]{Definition} 
\theoremstyle{remark}
\newcommand{\N}{\mathbb{N}}
\newcommand{\Z}{\mathbb{Z}}
\newcommand{\C}{\mathbb{C}}
\newcommand{\bmu}{\bm{\mu}}
\newcommand{\bnu}{\bm{\nu}}
\newcommand{\muvec}{\mu_1,\ldots,\mu_p}
\newcommand{\nuvec}{\nu_1,\ldots,\nu_r}
\begin{document}
\title{A generalization of the duality for finite multiple harmonic $q$-series}
\author{Gaku Kawashima}
\date{}
\maketitle

\begin{abstract}
 Recently, Bradley studied partial sums of multiple $q$-zeta values
 and proved a duality result.
 In this paper, we present a generalization of his result.
\end{abstract}

\begin{center}
 Keywords: finite multiple harmonic $q$-series\\
\end{center}

\section{Introduction}
Recently, finite multiple harmonic sums (MHSs for short) have been
studied in connection with theoretical physics~\cite{BK,V}.
In~\cite{H,Z}, the $p$-divisibility of MHSs for primes $p$ have been 
investigated.
MHSs have a remarkable property known as the duality
and a generalization of this formula, which we call the difference
formula for MHSs, was given in~\cite[Theorem 3.8]{K}.
On the other hand, in~\cite{B1}, 
Bradley proved a $q$-analog of the duality for MHSs.
In the present paper, we shall consider a $q$-analog of the difference
formula for MHSs.
We note that the argument is parallel to that in~\cite{K}.
\par
Here, we explain the duality for finite multiple harmonic $q$-series
due to Bradley.
Let $0 < q < 1$.
The $q$-analog of a non-negative integer $n$ is given by
\begin{displaymath}
 [n]_q = \frac{1-q^n}{1-q}.
\end{displaymath}
For any multi-index (i.e. a finite sequence of positive integers)
$\bmu=(\muvec)$, we define
\begin{align*}
 a_{\bmu}(n) &= \sum_{n=n_1 \ge \cdots \ge n_p \ge 0}
 \frac{q^{(\mu_1-1)(n_1+1)+\cdots+(\mu_p-1)(n_p+1)}}
 {[n_1+1]_q^{\mu_1} \cdots [n_p+1]_q^{\mu_p}}, \quad 0 \le n \in \Z, 
 \intertext{and}
 b_{\bmu}(n) &= \sum_{n=n_1 \ge \cdots \ge n_p \ge 0}
 \frac{q^{(n_2+1)+\cdots+(n_p+1)}}
 {[n_1+1]_q^{\mu_1} \cdots [n_p+1]_q^{\mu_p}}, \quad 0 \le n \in \Z.
\end{align*}
We note that the sum of the infinite series 
$\sum_{n=0}^{\infty}a_{\bmu}(n)$ is the quantity
known as the (non-strict) multiple $q$-zeta value, which has been investigated
in recent years~\cite{B2,B3,OO,OOZ,OT}.
The following is the duality for finite multiple harmonic $q$-series:
\begin{equation}
 \sum_{i=0}^k (-1)^i q^{\frac{i(i+1)}{2}} 
 \begin{bmatrix}
  k \\ i
 \end{bmatrix}_q
 a_{\bmu}(i) = b_{\bmu^{*}}(k), \quad 0 \le k \in \Z, \label{eq1-10}
\end{equation}
where
\begin{displaymath}
 \begin{bmatrix}
  k \\ i
 \end{bmatrix}_q
 = \frac{[k]_q!}{[i]_q!\,[k-i]_q!}
\end{displaymath}
is the $q$-binomial coefficient and $\bmu^{*}$ is the dual multi-index of
$\bmu$.
(The formula is slightly modified from Bradley's one for the purpose of 
generalization.)
The definition of $\bmu^{*}$ will be given in Section \ref{sec3}.
For example, we have
\begin{equation}
 (2,2)^{*} = (1,2,1),\quad (1,1,2)^{*} = (3,1) \quad \text{and} \quad
 (4)^{*} = (1,1,1,1) \label{eq1-100}
\end{equation}
by the diagrams
\begin{displaymath}
 {\setlength{\arraycolsep}{0pt}
 \begin{array}{ccccccc}
          &        &        &\downarrow&        &        &         \\
  \bigcirc&        &\bigcirc&          &\bigcirc&        &\bigcirc \\
          &\uparrow&        &          &        &\uparrow&
 \end{array}}\, , \qquad
 {\setlength{\arraycolsep}{0pt}
 \begin{array}{ccccccc}
          &\downarrow&        &\downarrow&        &        &         \\
  \bigcirc&          &\bigcirc&          &\bigcirc&        &\bigcirc \\
          &          &        &          &        &\uparrow&  
 \end{array}} \qquad \text{and} \qquad
 {\setlength{\arraycolsep}{0pt}
 \begin{array}{ccccccc}
          &        &        &        &        &        &         \\
  \bigcirc&        &\bigcirc&        &\bigcirc&        &\bigcirc \\
          &\uparrow&        &\uparrow&        &\uparrow&
 \end{array}}\,,
\end{displaymath}
where the lower arrows are in the complementary slots to the upper
arrows.
\par
We next illustrate the main result of this paper.
For a multi-index $\bmu=(\muvec)$, the quantity
$|\bmu|=\mu_1+\cdots+\mu_p$ is called the weight of $\bmu$.
We introduce nested sums
\begin{multline*}
 c_{\bmu,\bnu}(n,k) = 
 \begin{bmatrix}
  n+k \\ n
 \end{bmatrix}_q^{-1}
 \sum_{\substack{n=n_1 \ge \cdots \ge n_p \ge 0\\ 
 k=k_1 \ge \cdots \ge k_r \ge 0}}
 \frac{q^{(\mu_1-1)(n_1+1)+\cdots+(\mu_p-1)(n_p+1)+k_2+\cdots+k_r}}
 {[n_{i_1}+k_{j_1}+1]_q \cdots [n_{i_m}+k_{j_m}+1]_q}, \\
  0 \le n,\,k \in \Z, 
\end{multline*}
for multi-indices $\bmu=(\muvec)$ and $\bnu=(\nuvec)$ of the same weight
$m$.
The subscripts $i_1,\ldots,i_m$ and $j_1,\ldots,j_m$ are defined by
\begin{align*}
 (i_1,\ldots,i_m) &= 
 (\underbrace{1,\ldots,1}_{\mu_1},\underbrace{2,\ldots,2}_{\mu_2},\ldots,
 \underbrace{p,\ldots,p}_{\mu_p})
 \intertext{and}
 (j_1,\ldots,j_m) &= 
 (\underbrace{1,\ldots,1}_{\nu_1},\underbrace{2,\ldots,2}_{\nu_2},\ldots,
 \underbrace{r,\ldots,r}_{\nu_r}),
\end{align*}
respectively.
For example, for $\bmu=(3,1)$ and $\bnu=(1,1,2)$, we have
\begin{multline*}
 c_{\bmu,\bnu}(n,k) = 
 \begin{bmatrix}
  n+k \\ n
 \end{bmatrix}_q^{-1} \\
 \times
 \sum_{\substack{n=n_1 \ge n_2 \ge 0\\ 
 k=k_1 \ge k_2 \ge k_3 \ge 0}} 
 \frac{q^{2(n_1+1)+k_2+k_3}}
 {[n_{1}+k_{1}+1]_q [n_{1}+k_{2}+1]_q [n_{1}+k_{3}+1]_q
 [n_{2}+k_{3}+1]_q}.
\end{multline*}
The following is the main result of this paper:
For any multi-index $\bmu$, we have
\begin{equation}
 \sum_{i=0}^k (-1)^i q^{\frac{i(i+1)}{2}} 
 \begin{bmatrix}
  k \\ i
 \end{bmatrix}_q
 a_{\bmu}(n+i) = c_{\bmu,\bmu^{*}}(n,k), 
 \quad 0 \le n,\,k \in \Z. \label{eq1-110}
\end{equation}
As we see in Section \ref{sec3}, the equality 
$c_{\bmu,\bmu^{*}}(0,k) = b_{\bmu^{*}}(k)$
holds. Hence the formula (\ref{eq1-110}) is a generalization of the
formula (\ref{eq1-10}).
In Section \ref{sec2}, we interpret the left-hand side of
(\ref{eq1-110}) as the $k$-th $q$-difference of the sequence 
$a_{\bmu} \in \C^{\N}$.
The proof of (\ref{eq1-110}) is given in Section \ref{sec3}.
\section{$q$-differences of a sequence} \label{sec2}
In this section, we define the $k$-th $q$-difference of a sequence 
for a non-negative integer $k$
and give an explicit expression for it.
Throughout this paper, we fix a complex number $q$ equal to neither 
$0$ nor $1$.
(When dealing with multiple $q$-zeta values, we usually assume that
$0 < q < 1$.
But it is not necessary in finite expressions to restrict $q$ to 
the range $0 < q < 1$.)
In the following, we denote by $\N$ the set of non-negative integers.
\begin{definition} 
 \label{def2-10}
 For any $z \in \C$, we define the difference operator 
 $\Delta_z \colon \C^{\N} \to \C^{\N}$ by putting
 \begin{displaymath}
  (\Delta_z a)(n) = a(n) - z a(n+1)
 \end{displaymath}
for any $a \in \C^{\N}$ and any $n \in \N$.
\end{definition}
\begin{definition}
 \label{def2-20}
 For any $k \in \N$, we define the $k$-th $q$-difference operator by
\begin{displaymath}
 \Delta_{q,k} = \Delta_{q^k} \circ \Delta_{q^{k-1}} \circ \cdots \circ
 \Delta_{q},
\end{displaymath}
where $\Delta_{q,0}$ is defined to be the identity on $\C^{\N}$. 
\end{definition}
\begin{definition}
 We define the operator $\nabla_q \colon \C^{\N} \to \C^{\N}$ by putting
 \begin{displaymath}
  (\nabla_q a)(n) = (\Delta_{q,n}a)(0)
 \end{displaymath}
 for any $a \in \C^{\N}$ and any $n \in \N$.
\end{definition}
Let $\C[[X]]$ (resp. $\C[[X,Y]]$) be the ring of formal power series 
in one variable (resp. two variables) over $\C$.
For a sequence $a \in \C^{\N}$, we consider a formal power series
\begin{equation}
 F_{a}(X,Y) 
 = \sum_{n,k=0}^{\infty}(\Delta_{q,k}a)(n) \frac{X^nY^k}{[n]_q!\,[k]_q!}
 \in \C[[X,Y]]. \label{eq2-3}
\end{equation}
The quantities
\begin{displaymath}
 [n]_q = \frac{1-q^n}{1-q} \quad \text{and} \quad 
 [n]_q! = [n]_q [n-1]_{q} \cdots [1]_q
\end{displaymath}
are the $q$-integer and the $q$-factorial, respectively.
As usual, we put $[0]_q! = 1$.
The $q$-derivative of a formal power series $f(X) \in \C[[X]]$ is
defined as
\begin{displaymath}
 D_q f(X) = \left(\frac{d}{dX}\right)_q f(X) =
 \frac{f(qX)-f(X)}{qX-X} \in \C[[X]].
\end{displaymath}
We have the $q$-Leibniz rule
\begin{equation}
 D_q^n (f(X)g(X)) = 
 \sum_{k=0}^n 
 \begin{bmatrix}
  n \\ k
 \end{bmatrix}_q
 (D_q^k f)(X) (D_q^{n-k} g)(q^k X) \label{eq2-5}
\end{equation}
for any $f(X)$, $g(X) \in \C[[X]]$ and any $n \in \N$, where
\begin{displaymath}
 \begin{bmatrix}
  n \\ k
 \end{bmatrix}_q
 = \frac{[n]_q!}{[k]_q!\,[n-k]_q!}
\end{displaymath}
is the $q$-binomial coefficient.
We put $\partial_{X} = (\partial/\partial X)_q$ 
and $\partial_{Y} = (\partial/\partial Y)_q$.
For any $f(X,Y) \in \C[[X,Y]]$, we define
\begin{displaymath}
 \Lambda_{X} f(X,Y) = f(qX,Y)
 \quad \text{and} \quad
 \Lambda_{Y} f(X,Y) = f(X,qY).
\end{displaymath}
By the definition of the $q$-derivative, we have
\begin{equation}
 (1-q) X \partial_X = 1 - \Lambda_X \quad \text{and} \quad
 (1-q) Y \partial_Y = 1 - \Lambda_Y. \label{eq2-7}
\end{equation}
The $q$-commutator of operators $A$ and $B$ is defined as
\begin{displaymath}
 [A,B]_q = AB - qBA.
\end{displaymath}
We have the following $q$-commutation relations:
\begin{alignat}{3}
  &[\partial_X, \Lambda_X]_q &&= [\partial_Y, \Lambda_Y]_q &&= 0, \notag\\
  &[\Lambda_X, X]_q &&= [\Lambda_Y, Y]_q &&= 0, \label{eq2-8} \\ 
  &[\partial_X, X]_q &&= [\partial_Y, Y]_q &&= 1. \notag
\end{alignat}
We note that for a formal power series
\begin{displaymath}
 f(X,Y) =
 \sum_{n,k=0}^{\infty}a(n,k) \frac{X^nY^k}{[n]_q!\,[k]_q!} \in \C[[X,Y]]
\end{displaymath}
the equality
\begin{multline}
 (q\partial_{X}\Lambda_{Y} + \partial_{Y} - 1) f(X,Y) \\
 = \sum_{n,k=0}^{\infty} 
 \left\{q^{k+1} a(n+1,k) + a(n,k+1) - a(n,k)\right\} 
 \frac{X^nY^k}{[n]_q!\,[k]_q!}
 \label{eq2-10}
\end{multline}
holds. From this, we easily see that
\begin{equation}
 (q\partial_{X}\Lambda_{Y} + \partial_{Y} - 1) F_{a}(X,Y) = 0.
 \label{eq2-20}
\end{equation}
\begin{lemma}
 \label{lem2-30}
 If a formal power series $f(X,Y) \in \C[[X,Y]]$ satisfies two
 conditions
 \begin{displaymath}
  (q\partial_{X}\Lambda_{Y} + \partial_{Y} - 1)f(X,Y) = 0
  \quad \text{and} \quad f(X,0)=0,
 \end{displaymath}
 then we have $f(X,Y)=0$.
\end{lemma}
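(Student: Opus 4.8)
The plan is to pass to coefficients and reduce both hypotheses to a single recurrence that can be solved by a one-line induction. Write
\[
 f(X,Y) = \sum_{n,k=0}^{\infty} a(n,k) \frac{X^nY^k}{[n]_q!\,[k]_q!}
\]
for suitable coefficients $a(n,k) \in \C$; since a formal power series vanishes if and only if all its coefficients vanish, it suffices to prove that $a(n,k)=0$ for every $n,k \in \N$.

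First I would use the coefficient expansion (\ref{eq2-10}) to translate the operator condition into a relation among the $a(n,k)$. Because the left-hand side of (\ref{eq2-10}) is identically zero as a power series, every coefficient must vanish, which gives
\[
 q^{k+1} a(n+1,k) + a(n,k+1) - a(n,k) = 0 \qquad \text{for all } n,k \in \N.
\]
I would rewrite this as the forward recurrence
\[
 a(n,k+1) = a(n,k) - q^{k+1} a(n+1,k).
\]
On the other hand, the second hypothesis $f(X,0)=0$ says precisely that $a(n,0)=0$ for every $n \in \N$, since these are exactly the coefficients of the $Y^0$ part of $f$.

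Second, I would induct on $k$. The base case $k=0$ is the initial condition just noted. For the inductive step, assume $a(n,k)=0$ for all $n \in \N$; then the recurrence yields $a(n,k+1) = 0 - q^{k+1}\cdot 0 = 0$ for all $n$. Hence $a(n,k)=0$ for all $n,k \in \N$, i.e. $f(X,Y)=0$, as desired.

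I do not expect a genuine obstacle here: the only point requiring care is the bookkeeping in deriving the coefficient recurrence, and that is already carried out in (\ref{eq2-10}). The essential structural feature that makes the induction succeed is that, read on coefficients, the operator expresses $a(n,k+1)$ solely in terms of the level-$k$ data $a(n,k)$ and $a(n+1,k)$; this triangular dependence is exactly why the vanishing of the single slice $f(X,0)=0$ propagates up through all $k$ and forces $f \equiv 0$.
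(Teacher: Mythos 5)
Your proposal is correct and follows essentially the same route as the paper: expand $f$ in the basis $X^nY^k/([n]_q!\,[k]_q!)$, use (\ref{eq2-10}) to turn the operator equation into the coefficient recurrence $q^{k+1}a(n+1,k)+a(n,k+1)-a(n,k)=0$, read $f(X,0)=0$ as $a(n,0)=0$, and induct on $k$. No gaps.
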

\begin{proof}
 Let
 \begin{displaymath}
  f(X,Y) =
  \sum_{n,k=0}^{\infty}a(n,k) \frac{X^nY^k}{[n]_q!\,[k]_q!} \in \C[[X,Y]]
 \end{displaymath}
 satisfy the two conditions of the lemma.
 Then, by (\ref{eq2-10}), we have 
 \begin{gather*}
  q^{k+1} a(n+1,k) + a(n,k+1) - a(n,k) = 0 \quad \text{for any $n,k \in \N$}
  \intertext{and}
  a(n,0) = 0 \quad \text{for any $n \in \N$}.
 \end{gather*}
 Therefore we obtain the result by using induction on $k$.
\end{proof}
For any sequence $a \in \C^{\N}$, we put
\begin{displaymath}
 f_{a}(X,Y) = 
 \sum_{n=0}^{\infty}a(n) \frac{(X-qY)(X-q^2Y)\cdots(X-q^nY)}{[n]_q!}
 \in \C[[X,Y]].
\end{displaymath}
We note that
\begin{multline}
 \partial_{X} \left\{(X-q^mY)(X-q^{m+1}Y)\cdots(X-q^nY)\right\} \\
 = [n-m+1]_q (X-q^mY)(X-q^{m+1}Y)\cdots(X-q^{n-1}Y) \label{eq2-30}
\end{multline}
and
\begin{multline}
 \partial_{Y} \left\{(X-q^mY)(X-q^{m+1}Y)\cdots(X-q^nY)\right\} \\
 = -q^m[n-m+1]_q (X-q^{m+1}Y)(X-q^{m+2}Y)\cdots(X-q^{n}Y) \label{eq2-40}
\end{multline}
for any integers $1 \le m \le n$,
which are immediate from the definition of the $q$-derivative.
A $q$-analog  of the exponential function is given by
\begin{displaymath}
 e(X) = \sum_{n=0}^{\infty} \frac{X^n}{[n]_q!} \in \C[[X]].
\end{displaymath}
\begin{proposition}
 \label{prop2-40}
 For any sequence $a \in \C^{\N}$, we have
 \begin{displaymath}
  F_{a}(X,Y) = f_{a}(X,Y) e(Y).
 \end{displaymath}
\end{proposition}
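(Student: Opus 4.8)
The plan is to prove the identity via the uniqueness principle furnished by Lemma~\ref{lem2-30}. Set $g(X,Y) = F_{a}(X,Y) - f_{a}(X,Y)e(Y) \in \C[[X,Y]]$; it then suffices to verify the two hypotheses of that lemma for $g$, namely
\begin{displaymath}
 (q\partial_{X}\Lambda_{Y} + \partial_{Y} - 1)g = 0 \quad \text{and} \quad g(X,0)=0,
\end{displaymath}
since these force $g=0$. The initial condition is immediate: because $\Delta_{q,0}$ is the identity, putting $Y=0$ in (\ref{eq2-3}) gives $F_{a}(X,0) = \sum_{n=0}^{\infty} a(n) X^n/[n]_q!$, while each factor $(X-q^iY)$ reduces to $X$ at $Y=0$, so that $f_{a}(X,0) = \sum_{n=0}^{\infty} a(n) X^n/[n]_q!$ as well; since $e(0)=1$ we conclude $f_{a}(X,0)e(0) = F_{a}(X,0)$ and hence $g(X,0)=0$.

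For the differential condition, equation (\ref{eq2-20}) already gives $(q\partial_{X}\Lambda_{Y} + \partial_{Y} - 1)F_{a} = 0$, so the entire matter reduces to showing $(q\partial_{X}\Lambda_{Y} + \partial_{Y} - 1)(f_{a}e) = 0$. Here I would first simplify the operator acting on the product. Applying the $q$-Leibniz rule (\ref{eq2-5}) in the variable $Y$ (with $X$ held fixed), and using $\partial_{Y}e = e$ together with the fact that the substitution $\Lambda_{Y}$ is multiplicative and commutes with $\partial_{X}$, one computes
\begin{displaymath}
 \partial_{Y}(f_{a}e) = (\partial_{Y}f_{a})\,e(qY) + f_{a}\,e(Y), \qquad
 q\partial_{X}\Lambda_{Y}(f_{a}e) = q(\Lambda_{Y}\partial_{X}f_{a})\,e(qY).
\end{displaymath}
The term $f_{a}e(Y)$ cancels against the $-1$ contribution, leaving
\begin{displaymath}
 (q\partial_{X}\Lambda_{Y} + \partial_{Y} - 1)(f_{a}e) = \bigl(q\Lambda_{Y}\partial_{X}f_{a} + \partial_{Y}f_{a}\bigr)\,e(qY).
\end{displaymath}
Thus it remains only to establish $q\Lambda_{Y}\partial_{X}f_{a} + \partial_{Y}f_{a} = 0$.

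This last identity is the crux of the argument, and I expect it to follow termwise from the two product-differentiation formulas already recorded. Writing $P_n = (X-qY)\cdots(X-q^nY)$, formula (\ref{eq2-30}) with $m=1$ gives $\partial_{X}P_n = [n]_q (X-qY)\cdots(X-q^{n-1}Y)$; applying $\Lambda_{Y}$, which sends $Y\mapsto qY$ and hence each factor $X-q^iY \mapsto X-q^{i+1}Y$, yields $\Lambda_{Y}\partial_{X}P_n = [n]_q (X-q^2Y)\cdots(X-q^nY)$. On the other hand, (\ref{eq2-40}) with $m=1$ gives $\partial_{Y}P_n = -q[n]_q (X-q^2Y)\cdots(X-q^nY)$. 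Therefore $q\Lambda_{Y}\partial_{X}P_n + \partial_{Y}P_n = 0$ for every $n$, and summing against the coefficients $a(n)/[n]_q!$ gives the desired vanishing. With both hypotheses of Lemma~\ref{lem2-30} verified, we obtain $g=0$, which is the assertion of the proposition. The only delicate points are the bookkeeping of the index shifts produced by $\Lambda_{Y}$ and keeping to the correct form of the $q$-Leibniz rule; both are routine once the operator has been reduced as above.
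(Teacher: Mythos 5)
Your proof is correct and follows essentially the same route as the paper: both verify the initial condition $F_a(X,0)=f_a(X,0)e(0)$ and then check that $f_a(X,Y)e(Y)$ is annihilated by $q\partial_X\Lambda_Y+\partial_Y-1$ using the $q$-Leibniz rule together with (\ref{eq2-30}) and (\ref{eq2-40}), concluding by the uniqueness statement of Lemma~\ref{lem2-30}. Your termwise cancellation $q\Lambda_Y\partial_X P_n+\partial_Y P_n=0$ is exactly the computation the paper carries out after summing over $n$.
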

\begin{proof}
 It is easily seen that
 \begin{displaymath}
  F_a(X,0) = f_a(X,0) e(0).
 \end{displaymath}
 According to Lemma \ref{lem2-30} and (\ref{eq2-20}), we only have to
 prove the identity
 \begin{equation}
  (q\partial_{X}\Lambda_{Y} + \partial_{Y} - 1)
  \left\{f_{a}(X,Y)e(Y)\right\} = 0. \label{eq2-50}
 \end{equation}
 By (\ref{eq2-30}), (\ref{eq2-40}) and the $q$-Leibniz rule
 (\ref{eq2-5}), we have
 \begin{multline*}
  q \partial_{X} \Lambda_{Y} \left\{f_{a}(X,Y)e(Y)\right\}
  = q \left\{\sum_{n=1}^{\infty}a(n)
  \frac{(X-q^2Y)\cdots(X-q^nY)}{[n-1]_q!}\right\} e(qY)
 \end{multline*}
 and
 \begin{multline*}
  \partial_{Y} \left\{f_{a}(X,Y)e(Y)\right\} \\
  = -q \left\{\sum_{n=1}^{\infty}a(n)
  \frac{(X-q^2Y)\cdots(X-q^nY)}{[n-1]_q!}\right\} e(qY)
  + f_{a}(X,Y) e(Y).
 \end{multline*}
 From these, the identity (\ref{eq2-50}) immediately follows.
\end{proof}
\begin{corollary}
 \label{cor2-50}
 Let $a \in \C^{\N}$ be a sequence. Then, for any $n$, $k \in \N$, we have
 \begin{displaymath}
  (\Delta_{q,k}a)(n) = \sum_{i=0}^{k} (-1)^i q^{\frac{i(i+1)}{2}}
  \begin{bmatrix}
   k \\ i
  \end{bmatrix}_q
  a(n+i).
 \end{displaymath}
\end{corollary}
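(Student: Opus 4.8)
The plan is to read the claimed formula straight off Proposition~\ref{prop2-40} by comparing the coefficient of $X^nY^k$ on the two sides of the factorization $F_a(X,Y) = f_a(X,Y)e(Y)$. By the definition (\ref{eq2-3}) of $F_a$, that coefficient on the left equals $(\Delta_{q,k}a)(n)/([n]_q!\,[k]_q!)$, so it suffices to compute the corresponding coefficient of the product $f_a(X,Y)e(Y)$ and match the two.

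First I would expand each building block of $f_a$ as a polynomial in $X$ and $Y$. Iterating (\ref{eq2-30}) (equivalently, by the Gauss $q$-binomial theorem) one obtains
\begin{displaymath}
 (X-qY)(X-q^2Y)\cdots(X-q^mY) = \sum_{l=0}^m (-1)^l q^{\frac{l(l+1)}{2}}
 \begin{bmatrix} m \\ l \end{bmatrix}_q X^{m-l}Y^l,
\end{displaymath}
where the power of $q$ comes from $\binom{l}{2}+l=\frac{l(l+1)}{2}$. Extracting the coefficient of $X^n$ in $f_a$ forces $m-l=n$, i.e.\ $m=n+l$, and using the collapse $\begin{bmatrix} n+l \\ n \end{bmatrix}_q\big/[n+l]_q! = 1/([n]_q!\,[l]_q!)$ the factorial $[n+l]_q!$ cancels, leaving
\begin{displaymath}
 [X^n]f_a(X,Y) = \frac{1}{[n]_q!}\sum_{l=0}^{\infty} (-1)^l q^{\frac{l(l+1)}{2}}
 \frac{a(n+l)}{[l]_q!}Y^l.
\end{displaymath}

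Next I would multiply by $e(Y)=\sum_{p\ge 0}Y^p/[p]_q!$ and take the coefficient of $Y^k$; the Cauchy product in $Y$ (with $p=k-l$) gives $\frac{1}{[n]_q!}\sum_{l=0}^k (-1)^l q^{\frac{l(l+1)}{2}}\frac{a(n+l)}{[l]_q!\,[k-l]_q!}$. Matching this against $(\Delta_{q,k}a)(n)/([n]_q!\,[k]_q!)$ and clearing the common factor $[k]_q!/([l]_q!\,[k-l]_q!)=\begin{bmatrix}k\\l\end{bmatrix}_q$ yields exactly the asserted identity.

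The only genuine work is the first step---the expansion of $\prod_{j=1}^m(X-q^jY)$ together with the careful tracking of the power of $q$, and the factorial collapse that makes the double sum telescope; after that everything is formal coefficient comparison. As a cross-check (and an independent route that avoids Proposition~\ref{prop2-40}), I would note that the identity also follows by induction on $k$ from the recursion $\Delta_{q,k+1}=\Delta_{q^{k+1}}\circ\Delta_{q,k}$, where the inductive step reduces precisely to the $q$-Pascal relation $\begin{bmatrix}k+1\\i\end{bmatrix}_q = \begin{bmatrix}k\\i\end{bmatrix}_q + q^{k+1-i}\begin{bmatrix}k\\i-1\end{bmatrix}_q$.
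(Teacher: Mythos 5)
Your proof is correct and in substance follows the same route as the paper's: both read the identity off Proposition~\ref{prop2-40} by extracting the coefficient of $X^nY^k$ from $F_a(X,Y)=f_a(X,Y)e(Y)$ (the paper phrases this as applying $\partial_X^n\partial_Y^k$ and setting $X=Y=0$, which is the same operation up to the factor $[n]_q!\,[k]_q!$). The only real difference is in the bookkeeping for the product: the paper uses the $q$-Leibniz rule (\ref{eq2-5}) and then asserts without proof the evaluation $(\partial_X^n\partial_Y^i f_a)(0,0)=(-1)^iq^{\frac{i(i+1)}{2}}a(n+i)$, whereas you make that step fully explicit by expanding $\prod_{j=1}^{m}(X-q^jY)$ via the Gauss $q$-binomial theorem and taking a Cauchy product in $Y$; your intermediate formulas all check out, including the exponent $\binom{l}{2}+l=\frac{l(l+1)}{2}$ and the collapse of $\begin{bmatrix} n+l \\ n \end{bmatrix}_q\!/[n+l]_q!$ to $1/([n]_q!\,[l]_q!)$. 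Your closing cross-check is the one genuinely different ingredient and is worth noting: induction on $k$ via $\Delta_{q,k+1}=\Delta_{q^{k+1}}\circ\Delta_{q,k}$ together with the $q$-Pascal relation $\begin{bmatrix}k+1\\i\end{bmatrix}_q=\begin{bmatrix}k\\i\end{bmatrix}_q+q^{k+1-i}\begin{bmatrix}k\\i-1\end{bmatrix}_q$ (which is exactly what the reindexed second sum requires) gives an elementary proof that bypasses Proposition~\ref{prop2-40} and the generating functions entirely.
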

\begin{proof}
 We apply the operator $\partial_{X}^n \partial_{Y}^k$ to both sides
 of the equation in Proposition \ref{prop2-40}:
 \begin{equation}
  \partial_{X}^n \partial_{Y}^k F_{a}(X,Y) = 
  \partial_{X}^n \partial_{Y}^k \left\{f_{a}(X,Y) e(Y)\right\}. \label{eq2-60}
 \end{equation}
 The right-hand side is equal to
 \begin{displaymath}
  \sum_{i=0}^k 
  \begin{bmatrix}
   k \\ i
  \end{bmatrix}_q
  (\partial_{X}^n \partial_{Y}^i f_{a})(X,Y) e(q^i Y)
 \end{displaymath}
 by the $q$-Leibniz rule (\ref{eq2-5}). Since we have
 \begin{displaymath}
  (\partial_{X}^n \partial_{Y}^i f_{a})(0,0)
  = (-1)^i q^{\frac{i(i+1)}{2}} a(n+i),
 \end{displaymath}
 the desired equality follows from (\ref{eq2-60}) on setting $X=Y=0$.
\end{proof}
\begin{corollary}
 Let $a \in \C^{\N}$ be a sequence. Then, for any $n \in \N$, we have 
 \begin{displaymath}
  (\nabla_{q}a)(n) = \sum_{k=0}^{n} (-1)^k q^{\frac{k(k+1)}{2}}  
  \begin{bmatrix}
   n \\ k
  \end{bmatrix}_q
  a(k).
 \end{displaymath}
\end{corollary}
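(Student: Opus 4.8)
The plan is to derive this identity directly from Corollary \ref{cor2-50}, since the operator $\nabla_q$ is defined precisely by $(\nabla_q a)(n) = (\Delta_{q,n}a)(0)$. Thus no new machinery is required: the entire task reduces to reading off the right-hand side of Corollary \ref{cor2-50} in the special case where the evaluation point is $0$ and the order of the $q$-difference equals $n$.

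Concretely, I would begin from the definition $(\nabla_q a)(n) = (\Delta_{q,n}a)(0)$ and then invoke Corollary \ref{cor2-50}. In the notation of that corollary, the subscript of $\Delta_{q,\bullet}$ is the order of the difference and the argument in parentheses is the base point at which the difference is evaluated; here I specialize the order to $n$ and the base point to $0$. This yields
\[
 (\Delta_{q,n}a)(0) = \sum_{i=0}^{n} (-1)^i q^{\frac{i(i+1)}{2}}
 \begin{bmatrix}
  n \\ i
 \end{bmatrix}_q
 a(0+i).
\]
Since $a(0+i)=a(i)$, renaming the summation index $i$ to $k$ produces exactly the asserted formula, completing the proof.

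The argument is entirely mechanical and I do not expect any genuine obstacle. The one point deserving care is the bookkeeping of which parameter in Corollary \ref{cor2-50} is the difference order and which is the base point: one must substitute $0$ for the base point (the parenthetical argument) rather than for the order (the subscript) before relabeling the dummy summation index, so that the upper limit of the sum and the $q$-binomial coefficient both end up governed by $n$ and the sequence is sampled at the index $k$.
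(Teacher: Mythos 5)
Your proposal is correct and takes essentially the same approach as the paper, which likewise deduces the identity immediately from Corollary \ref{cor2-50} by specializing the evaluation point to $0$ and taking the difference order to be $n$. Your extra remark about not confusing the order (the subscript) with the base point (the argument) is just careful bookkeeping of the same one-line argument.
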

\begin{proof}
 It follows immediately from Corollary \ref{cor2-50} on setting $n=0$.
\end{proof}
\section{The difference formula for finite multiple harmonic $q$-series}
\label{sec3}
We begin with the definition of the dual of a multi-index.
A multi-index is a finite sequence of positive integers.
For a multi-index $\bmu=(\muvec)$, the quantities
$|\bmu| = \mu_1 + \cdots + \mu_p$ and $l(\bmu) = p$
are called the weight of $\bmu$ and the length of $\bmu$, respectively.
The multi-indices of weight $m$ are in one-to-one
correspondence with the subsets of the set $\{1,2,\ldots,m-1\}$ by
the mapping
\begin{displaymath}
 \mathcal{S}_m \colon (\muvec) \mapsto 
 \{\mu_1,\, \mu_1 + \mu_2,\, \ldots,\, \mu_1 + \mu_2 + \cdots + \mu_{p-1}\}.
\end{displaymath}
For example, in the case $m=3$, we have
\begin{displaymath}
 (3) \mapsto \emptyset, \qquad (1,2) \mapsto \{1\}, \qquad (2,1) \mapsto
 \{2\} \qquad \text{and} \qquad (1,1,1) \mapsto \{1,2\}
\end{displaymath}
from the diagrams
\begin{displaymath}
  {\arraycolsep=1pt
  \begin{array}{ccccc}
          & &        & &         \\
  \bigcirc& &\bigcirc& &\bigcirc \\
          &1&        &2&        
  \end{array}}\,,\qquad
  {\arraycolsep=1pt
  \begin{array}{ccccc}
          &\downarrow&        & &         \\
  \bigcirc&          &\bigcirc& &\bigcirc \\
          &1         &        &2&
  \end{array}}\,, \qquad
  {\arraycolsep=1pt
  \begin{array}{cccccccccc}
          & &        &\downarrow&         \\
  \bigcirc& &\bigcirc&          &\bigcirc \\
          &1&        &2         &
  \end{array}} \qquad \text{and} \qquad
  {\arraycolsep=1pt
  \begin{array}{ccccc}
          &\downarrow&        &\downarrow &         \\
  \bigcirc&          &\bigcirc&           &\bigcirc \\
          &1         &        &2          &
  \end{array}}\,.
\end{displaymath}
\begin{definition}
 \label{def3-10}
 Let $m$ be a positive integer and $\bmu$ a multi-index of weight $m$. 
 Then, we define the dual of $\bmu$ by
 \begin{displaymath}
  \bmu^{*} = \mathcal{S}_m^{-1}(\mathcal{S}_m(\bmu)^c),
 \end{displaymath}
 where $\mathcal{S}_m(\bmu)^c$ denotes the complement of 
 $\mathcal{S}_m(\bmu)$ in the set $\{1,2,\ldots,m-1\}$.
\end{definition}
Examples are given in (\ref{eq1-100}).
We note that the equality
\begin{equation}
 (l(\bmu)-1) + (l(\bmu^{*})-1) = |\bmu| - 1 \label{eq3-10}
\end{equation}
holds for any multi-index $\bmu$.
Now, we state the definition of the finite multiple harmonic $q$-series
which are considered in this paper.
\begin{definition}
 \label{def3-20}
 Let $\bmu=(\muvec)$ be a multi-index. Then, we put
 \begin{align*}
  a_{\bmu}(n) &= \sum_{n=n_1 \ge \cdots \ge n_p \ge 0}
  \frac{q^{(\mu_1-1)(n_1+1)+\cdots+(\mu_p-1)(n_p+1)}}
  {[n_1+1]_q^{\mu_1} \cdots [n_p+1]_q^{\mu_p}}
  \intertext{and}
  b_{\bmu}(n) &= \sum_{n=n_1 \ge \cdots \ge n_p \ge 0}
  \frac{q^{(n_2+1)+\cdots+(n_p+1)}}
  {[n_1+1]_q^{\mu_1} \cdots [n_p+1]_q^{\mu_p}}
 \end{align*}
 for any non-negative integer $n$.
\end{definition}
\begin{definition}
 \label{def3-30}
 Let $\bmu=(\muvec)$ and $\bnu=(\nuvec)$ be multi-indices of the same
 weight $m$. Then, we put
 \begin{displaymath}
  c_{\bmu,\bnu}(n,k) =
  \begin{bmatrix}
   n+k \\ n
  \end{bmatrix}_q^{-1}
  \sum_{\substack{n=n_1 \ge \cdots \ge n_p \ge 0\\ 
  k=k_1 \ge \cdots \ge k_r \ge 0}}
  \frac{q^{(\mu_1-1)(n_1+1)+\cdots+(\mu_p-1)(n_p+1)+k_2+\cdots+k_r}}
  {[n_{i_1}+k_{j_1}+1]_q \cdots [n_{i_m}+k_{j_m}+1]_q}
 \end{displaymath}
 for any non-negative integers $n$ and $k$, where the subscripts
 $i_1,\ldots,i_m,j_1,\ldots,j_m$ are defined by
 \begin{align*}
  (i_1,\ldots,i_m) &= 
  (\underbrace{1,\ldots,1}_{\mu_1},\underbrace{2,\ldots,2}_{\mu_2},\ldots,
  \underbrace{p,\ldots,p}_{\mu_p})
  \intertext{and}
  (j_1,\ldots,j_m) &= 
  (\underbrace{1,\ldots,1}_{\nu_1},\underbrace{2,\ldots,2}_{\nu_2},\ldots,
  \underbrace{r,\ldots,r}_{\nu_r}).
 \end{align*}
\end{definition}
Let $\bmu$ and $\bnu$ be multi-indices of the same weight.
Then, it is easily seen that
\begin{equation}
 c_{\bmu,\bnu}(n,0) = a_{\bmu}(n) \label{eq3-20}
\end{equation}
for any $n \in \N$. Moreover, by (\ref{eq3-10}), we have
\begin{equation}
 c_{\bmu,\bmu^{*}}(0,k) = b_{\bmu^{*}}(k) \label{eq3-30}
\end{equation}
for any $k \in \N$.
For any multi-index $\bmu=(\muvec)$ with $|\bmu| \ge 2$, we define
a multi-index ${}^{-}\!\bmu$ by
\begin{displaymath}
 {}^{-}\!\bmu =
 \begin{cases}
  (\mu_1-1,\mu_2,\ldots,\mu_p) & \text{if $\mu_1 \ge 2$} \\
  (\mu_2,\ldots,\mu_p) & \text{if $\mu_1 = 1$}.
 \end{cases}
\end{displaymath}
We note that
\begin{equation}
 {}^{-}\!(\bmu^{*}) = ({}^{-}\!\bmu)^{*}. \label{eq3-40}
\end{equation}
The following lemma states inductive relations of $c_{\bmu,\bnu}(n,k)$.
\begin{proposition}
 \label{prop3-40}
 Let $\bmu=(\muvec)$, $\bnu=(\nuvec)$ be multi-indices of the same
 weight greater than $1$ and $n$, $k$ non-negative integers. \\
 $\mathrm{(i)}$ If $\mu_1 \ge 2$ and $\nu_1 = 1$, then we have
 \begin{displaymath}
  q^{-n-k-1} \left\{[n+k+1]_q c_{\bmu,\bnu}(n,k) - [k]_q
  c_{\bmu,\bnu}(n,k-1) \right\} = c_{{}^{-}\!\bmu,{}^{-}\!\bnu}(n,k).
 \end{displaymath}
 $\mathrm{(ii)}$ If $\mu_1 = 1$ and $\nu_1 \ge 2$, then we have
 \begin{displaymath}
  [n+k+1]_q c_{\bmu,\bnu}(n,k) - [n]_q c_{\bmu,\bnu}(n-1,k) = 
  c_{{}^{-}\!\bmu,{}^{-}\!\bnu}(n,k).
 \end{displaymath}
\end{proposition}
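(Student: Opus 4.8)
The plan is to remove the $q$-binomial prefactors, reduce both identities to statements about the bare nested sums, and then exploit the fact that the first factor in the denominator stays constant over the whole range of summation. Write $S_{\bmu,\bnu}(n,k)$ for the sum occurring in Definition \ref{def3-30}, so that $c_{\bmu,\bnu}(n,k)=\begin{bmatrix} n+k\\ n\end{bmatrix}_q^{-1}S_{\bmu,\bnu}(n,k)$. From the elementary ratios
\begin{displaymath}
 \begin{bmatrix} n+k-1\\ n\end{bmatrix}_q^{-1}=\frac{[n+k]_q}{[k]_q}\begin{bmatrix} n+k\\ n\end{bmatrix}_q^{-1},\qquad \begin{bmatrix} n+k-1\\ n-1\end{bmatrix}_q^{-1}=\frac{[n+k]_q}{[n]_q}\begin{bmatrix} n+k\\ n\end{bmatrix}_q^{-1},
\end{displaymath}
I would first check that, after clearing the common factor $\begin{bmatrix} n+k\\ n\end{bmatrix}_q^{-1}$, statement $\mathrm{(i)}$ is equivalent to $q^{-n-k-1}\{[n+k+1]_qS_{\bmu,\bnu}(n,k)-[n+k]_qS_{\bmu,\bnu}(n,k-1)\}=S_{{}^{-}\!\bmu,{}^{-}\!\bnu}(n,k)$ and statement $\mathrm{(ii)}$ to $[n+k+1]_qS_{\bmu,\bnu}(n,k)-[n+k]_qS_{\bmu,\bnu}(n-1,k)=S_{{}^{-}\!\bmu,{}^{-}\!\bnu}(n,k)$; the terms that are formally undefined at $k=0$ or $n=0$ are killed by the factors $[k]_q$ and $[n]_q$.

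The key observation is that in the range of summation one always has $n_1=n$ and $k_1=k$, and since $i_1=j_1=1$ the first denominator factor equals the constant $[n_{i_1}+k_{j_1}+1]_q=[n+k+1]_q$, which may be pulled outside the sum. For $\mathrm{(i)}$, where $\mu_1\ge 2$ and $\nu_1=1$, the index $1$ occurs only once among $j_1,\ldots,j_m$, so $k_1=k$ appears neither in any other denominator factor nor in the exponent. Deleting this first factor therefore deletes one copy of $1$ from $(i_1,\ldots,i_m)$ and the unique $1$ from $(j_1,\ldots,j_m)$, which is exactly the passage from $\bmu,\bnu$ to ${}^{-}\!\bmu,{}^{-}\!\bnu$ once the $k$-variables are relabelled by $k_s\mapsto k_{s-1}$. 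Tracking the exponent, lowering $\mu_1$ by one removes $n_1+1=n+1$ and the relabelling removes $k_2$, so I expect
\begin{displaymath}
 [n+k+1]_qS_{\bmu,\bnu}(n,k)=q^{n+1}\sum_{k_2=0}^{k}q^{k_2}S_{{}^{-}\!\bmu,{}^{-}\!\bnu}(n,k_2),
\end{displaymath}
the sum running over the new top $k$-variable. Subtracting the same identity with $k$ replaced by $k-1$ telescopes the right-hand side down to its top term $q^{n+k+1}S_{{}^{-}\!\bmu,{}^{-}\!\bnu}(n,k)$, which yields $\mathrm{(i)}$ after dividing by $q^{n+k+1}$.

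For $\mathrm{(ii)}$, where $\mu_1=1$ and $\nu_1\ge 2$, the roles of the two variables are interchanged: now $\mu_1=1$ forces $n_1=n$ to occur only in the first factor, and its exponent coefficient $\mu_1-1$ vanishes, so $n_1$ is absent from the exponent as well. Peeling off the first factor now deletes the unique $1$ from $(i_1,\ldots,i_m)$ and one copy of $1$ from $(j_1,\ldots,j_m)$, turning $\bmu,\bnu$ into ${}^{-}\!\bmu,{}^{-}\!\bnu$; this time no power of $q$ is produced, and I expect
\begin{displaymath}
 [n+k+1]_qS_{\bmu,\bnu}(n,k)=\sum_{n_2=0}^{n}S_{{}^{-}\!\bmu,{}^{-}\!\bnu}(n_2,k),
\end{displaymath}
summed over the new top $n$-variable. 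Subtracting the version with $n$ replaced by $n-1$ again telescopes to the single term $S_{{}^{-}\!\bmu,{}^{-}\!\bnu}(n,k)$, which is $\mathrm{(ii)}$.

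The routine parts are the two $q$-binomial reductions and the verification that, after removing the first factor, the surviving denominator and the surviving exponent really coincide with those defining $S_{{}^{-}\!\bmu,{}^{-}\!\bnu}$. The one genuinely delicate point — and the only place where the two cases differ — is the exponent bookkeeping during the re-indexing: in $\mathrm{(i)}$ the drop in $\mu_1$ together with the shift $k_s\mapsto k_{s-1}$ produces the factor $q^{n+1+k_2}$, whereas in $\mathrm{(ii)}$ the vanishing coefficient $\mu_1-1=0$ and the fact that ${}^{-}\!\bnu$ retains the same $k$-variables make the corresponding factor trivial. Once this is pinned down, the telescoping and the final division are immediate.
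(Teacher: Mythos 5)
Your proposal is correct and follows essentially the same route as the paper: both cancel the constant first denominator factor $[n_{i_1}+k_{j_1}+1]_q=[n+k+1]_q$, observe that the now-free variable ($k_1$ in case (i), $n_1$ in case (ii)) disappears from the summand, and identify the difference of the two resulting sums (over $k\ge k_2\ge\cdots$ versus $k-1\ge k_2\ge\cdots$) with the single boundary term $k_2=k$, which after the exponent bookkeeping $q^{n+k+1}$ is exactly $c_{{}^{-}\!\bmu,{}^{-}\!\bnu}(n,k)$. Whether one phrases this as a telescoping sum over the new top variable (as you do) or as a set difference of summation ranges (as the paper does) is only a cosmetic distinction.
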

\begin{proof}
 Since the proof of (ii) is similar to that of (i),
 we prove only (i). We have
 \begin{multline*}
  [n+k+1]_q c_{\bmu,\bnu}(n,k) \\
  =
  \begin{bmatrix}
   n+k \\ n
  \end{bmatrix}_q^{-1}
  \sum_{\substack{n=n_1 \ge \cdots \ge n_p \ge 0\\ 
  k \ge k_2 \ge \cdots \ge k_r \ge 0}}
  \frac{q^{(\mu_1-1)(n_1+1)+\cdots+(\mu_p-1)(n_p+1)+k_2+\cdots+k_r}}
  {[n_{i_2}+k_{j_2}+1]_q \cdots [n_{i_m}+k_{j_m}+1]_q}
 \end{multline*}
 and
 \begin{align*}
  [k]_q &c_{\bmu,\bnu}(n,k-1) \\
  &= [k]_q \frac{[n]_q!\,[k-1]_q!}{[n+k-1]_q!}
  \sum_{\substack{n=n_1 \ge \cdots \ge n_p \ge 0\\ 
  k-1 = k_1 \ge \cdots \ge k_r \ge 0}}
  \frac{q^{(\mu_1-1)(n_1+1)+\cdots+(\mu_p-1)(n_p+1)+k_2+\cdots+k_r}}
  {[n_{i_1}+k_{j_1}+1]_q \cdots [n_{i_m}+k_{j_m}+1]_q} \\
  &=
  \begin{bmatrix}
   n+k \\ n
  \end{bmatrix}_q^{-1}
  \sum_{\substack{n=n_1 \ge \cdots \ge n_p \ge 0\\ 
  k-1 \ge k_2 \ge \cdots \ge k_r \ge 0}}
  \frac{q^{(\mu_1-1)(n_1+1)+\cdots+(\mu_p-1)(n_p+1)+k_2+\cdots+k_r}}
  {[n_{i_2}+k_{j_2}+1]_q \cdots [n_{i_m}+k_{j_m}+1]_q}.
 \end{align*}
 Therefore we obtain
 \begin{multline*}
  [n+k+1]_q c_{\bmu,\bnu}(n,k) - [k]_q c_{\bmu,\bnu}(n,k-1) \\
  =
  \begin{bmatrix}
   n+k \\ n
  \end{bmatrix}_q^{-1}
  \sum_{\substack{n=n_1 \ge \cdots \ge n_p \ge 0\\ 
  k = k_2 \ge \cdots \ge k_r \ge 0}}
  \frac{q^{(\mu_1-1)(n_1+1)+\cdots+(\mu_p-1)(n_p+1)+k_2+\cdots+k_r}}
  {[n_{i_2}+k_{j_2}+1]_q \cdots [n_{i_m}+k_{j_m}+1]_q},
 \end{multline*}
 from which the result follows immediately.
\end{proof}
We restate Proposition \ref{prop3-40} in terms of generating functions.
For multi-indices $\bmu$ and $\bnu$ of the same weight, we define
\begin{displaymath}
 G_{\bmu,\bnu}(X,Y) = \sum_{n,k=0}^{\infty} c_{\bmu,\bnu}(n,k)
 \frac{X^n Y^k}{[n]_q!\, [k]_q!}.
\end{displaymath}
\begin{proposition}
 \label{prop3-50}
 Let $\bmu=(\muvec)$ and $\bnu=(\nuvec)$ be multi-indices of the same
 weight greater than $1$. \\
 $\mathrm{(i)}$ If $\mu_1 \ge 2$ and $\nu_1 = 1$, then we have
 \begin{displaymath}
  q^{-1} \Lambda_{X}^{-1} \Lambda_{Y}^{-1}
  \left(\frac{1-q\Lambda_{X}\Lambda_{Y}}{1-q} - Y\right) 
  G_{\bmu,\bnu}(X,Y) = G_{{}^{-}\!\bmu,{}^{-}\!\bnu}(X,Y).
 \end{displaymath}
 $\mathrm{(ii)}$ If $\mu_1 = 1$ and $\nu_1 \ge 2$, then we have
 \begin{displaymath}
  \left(\frac{1-q\Lambda_{X}\Lambda_{Y}}{1-q} - X\right) 
  G_{\bmu,\bnu}(X,Y) = G_{{}^{-}\!\bmu,{}^{-}\!\bnu}(X,Y).
 \end{displaymath}
\end{proposition}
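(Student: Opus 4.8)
The plan is to prove both identities by reading off the coefficient of $\frac{X^nY^k}{[n]_q!\,[k]_q!}$ on each side and checking that they agree, which reduces the proposition to a direct transcription of Proposition~\ref{prop3-40}. The whole argument rests on knowing how each operator acts on a single monomial, so the first thing I would record is the action of $\frac{1-q\Lambda_X\Lambda_Y}{1-q}$. Since $\Lambda_X$ multiplies $X^n$ by $q^n$ and $\Lambda_Y$ multiplies $Y^k$ by $q^k$, the operator $q\Lambda_X\Lambda_Y$ scales the $(n,k)$-monomial by $q^{n+k+1}$, whence
\[
\frac{1-q\Lambda_X\Lambda_Y}{1-q}\,\frac{X^nY^k}{[n]_q!\,[k]_q!}
= [n+k+1]_q\,\frac{X^nY^k}{[n]_q!\,[k]_q!}.
\]
This is exactly the source of the factor $[n+k+1]_q$ in both parts of Proposition~\ref{prop3-40}. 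In the same way $\Lambda_X^{-1}\Lambda_Y^{-1}$ scales the $(n,k)$-monomial by $q^{-n-k}$, so $q^{-1}\Lambda_X^{-1}\Lambda_Y^{-1}$ produces precisely the prefactor $q^{-n-k-1}$ appearing in part~(i).

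Next I would determine the effect of multiplication by $X$ and by $Y$ on the coefficients. Writing $X\cdot\frac{X^nY^k}{[n]_q!\,[k]_q!} = [n+1]_q\,\frac{X^{n+1}Y^k}{[n+1]_q!\,[k]_q!}$ and reindexing, the coefficient of $\frac{X^nY^k}{[n]_q!\,[k]_q!}$ in $X\,G_{\bmu,\bnu}$ is $[n]_q\,c_{\bmu,\bnu}(n-1,k)$, and symmetrically the coefficient in $Y\,G_{\bmu,\bnu}$ is $[k]_q\,c_{\bmu,\bnu}(n,k-1)$. The edge cases $n=0$ and $k=0$ need no separate treatment, since $[0]_q=0$ kills the would-be term $c_{\bmu,\bnu}(-1,k)$ or $c_{\bmu,\bnu}(n,-1)$, matching the recurrences in Proposition~\ref{prop3-40} exactly.

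With these two observations in hand the proposition assembles immediately. For part~(ii), the coefficient of $\frac{X^nY^k}{[n]_q!\,[k]_q!}$ in $\bigl(\frac{1-q\Lambda_X\Lambda_Y}{1-q}-X\bigr)G_{\bmu,\bnu}$ is $[n+k+1]_q\,c_{\bmu,\bnu}(n,k)-[n]_q\,c_{\bmu,\bnu}(n-1,k)$, which equals $c_{{}^{-}\!\bmu,{}^{-}\!\bnu}(n,k)$ by Proposition~\ref{prop3-40}(ii); as the coefficients agree for all $n,k$, the series coincide. For part~(i), the coefficient in $\bigl(\frac{1-q\Lambda_X\Lambda_Y}{1-q}-Y\bigr)G_{\bmu,\bnu}$ is $[n+k+1]_q\,c_{\bmu,\bnu}(n,k)-[k]_q\,c_{\bmu,\bnu}(n,k-1)$, and applying $q^{-1}\Lambda_X^{-1}\Lambda_Y^{-1}$ multiplies it by $q^{-n-k-1}$, yielding precisely the left-hand side of Proposition~\ref{prop3-40}(i), namely $c_{{}^{-}\!\bmu,{}^{-}\!\bnu}(n,k)$.

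I do not expect a genuine obstacle, since the statement is a faithful operator-theoretic rewriting of an identity already proved. The only point demanding care is the appearance of $\Lambda_X^{-1}\Lambda_Y^{-1}$: I would note explicitly that this is a well-defined operator on $\C[[X,Y]]$ because it acts diagonally on the basis of monomials (scaling the $(n,k)$-coefficient by $q^{-n-k}$), so the left-hand side of part~(i) is unambiguous despite $\Lambda_X$ and $\Lambda_Y$ not being invertible as substitution maps. Beyond this bookkeeping, the verification is entirely routine.
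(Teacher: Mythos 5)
Your proposal is correct and matches the paper's approach: the paper's proof of Proposition \ref{prop3-50} simply states that it is immediate from Proposition \ref{prop3-40}, and your coefficient-by-coefficient verification (including the observation that $[0]_q=0$ handles the boundary cases and that $q^{-1}\Lambda_X^{-1}\Lambda_Y^{-1}$ supplies the prefactor $q^{-n-k-1}$) is exactly the routine bookkeeping being elided there.
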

\begin{proof}
 These are immediate from Proposition \ref{prop3-40}.
\end{proof}
We use Proposition \ref{prop3-50} in order to prove Theorem
\ref{th3-80} by induction, from which the main result follows easily.
We need two lemmas.
\begin{lemma}
 \label{lem3-60}
 $\mathrm{(i)}$ We have
 \begin{multline*}
  (q \partial_X \Lambda_Y + \partial_Y - 1) q^{-1} \Lambda_X^{-1} \Lambda_Y^{-1}
  \left(\frac{1-q\Lambda_X\Lambda_Y}{1-q} - Y\right) \\
  = 
  q^{-2} \Lambda_X^{-1} \Lambda_Y^{-1}
  \left(\frac{1-q^2\Lambda_X\Lambda_Y}{1-q} - qY\right)
  (q \partial_X \Lambda_Y + \partial_Y - 1).
 \end{multline*}
 $\mathrm{(ii)}$ We have
 \begin{displaymath}
  (q \partial_X \Lambda_Y + \partial_Y - 1)
  \left(\frac{1-q\Lambda_X\Lambda_Y}{1-q} - X\right)
  = 
  \left(\frac{1-q^2\Lambda_X\Lambda_Y}{1-q} - X\right)
  (q \partial_X \Lambda_Y + \partial_Y - 1).
 \end{displaymath}
\end{lemma}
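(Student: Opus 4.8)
The plan is to treat both identities as equalities between composite operators on $\C[[X,Y]]$ and to verify them by reducing each side to a common normal-ordered form, using the $q$-commutation relations (\ref{eq2-8}) together with (\ref{eq2-7}). The elementary moves I will use repeatedly, all consequences of (\ref{eq2-8}), are $\partial_X\Lambda_X = q\Lambda_X\partial_X$ and $\partial_Y\Lambda_Y = q\Lambda_Y\partial_Y$ (with the inverse forms $\partial_X\Lambda_X^{-1} = q^{-1}\Lambda_X^{-1}\partial_X$, $\partial_Y\Lambda_Y^{-1} = q^{-1}\Lambda_Y^{-1}\partial_Y$), $\Lambda_X X = qX\Lambda_X$ and $\Lambda_Y Y = qY\Lambda_Y$ (hence $\Lambda_Y^{-1}Y = q^{-1}Y\Lambda_Y^{-1}$), $\partial_X X = 1 + qX\partial_X$, $\partial_Y Y = 1 + qY\partial_Y$, together with the fact that every operator in $X$ commutes with every operator in $Y$. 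Writing $L = q\partial_X\Lambda_Y + \partial_Y - 1$ for the operator on the left, I will push all $q$-derivatives to the right and all shift operators to the left on each side and then compare coefficients of the resulting monomials.

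For part (ii), I first record the sub-identity $L\Lambda_X\Lambda_Y = q\Lambda_X\Lambda_Y L + (q-1)\Lambda_X\Lambda_Y$ and, from $\partial_X X = 1 + qX\partial_X$, the expansion $LX = q^2 X\partial_X\Lambda_Y + q\Lambda_Y + X\partial_Y - X$, to be compared with $XL = qX\partial_X\Lambda_Y + X\partial_Y - X$. Expanding $L\bigl(\tfrac{1-q\Lambda_X\Lambda_Y}{1-q} - X\bigr)$ and $\bigl(\tfrac{1-q^2\Lambda_X\Lambda_Y}{1-q} - X\bigr)L$ and substituting these, all terms cancel in pairs except the residual
\[
 q\left(\Lambda_X + (1-q)X\partial_X - 1\right)\Lambda_Y,
\]
which vanishes because (\ref{eq2-7}) gives $(1-q)X\partial_X = 1 - \Lambda_X$. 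This step — recognizing that the commutation relations (\ref{eq2-8}) alone do not close the computation and that one must trade $X\partial_X$ for $1-\Lambda_X$ via (\ref{eq2-7}) — is the conceptual heart of (ii).

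For part (i), I would first carry the prefactor $q^{-1}\Lambda_X^{-1}\Lambda_Y^{-1}$ (resp. $q^{-2}\Lambda_X^{-1}\Lambda_Y^{-1}$) through the parenthesis; using $\Lambda_X^{-1}\Lambda_Y^{-1}\Lambda_X\Lambda_Y = 1$ and $\Lambda_Y^{-1}Y = q^{-1}Y\Lambda_Y^{-1}$ this rewrites the left operator as $\tfrac{q^{-1}}{1-q}\Lambda_X^{-1}\Lambda_Y^{-1} - \tfrac{1}{1-q} - q^{-2}Y\Lambda_X^{-1}\Lambda_Y^{-1}$, and similarly for the right one. The computation then rests on two sub-identities obtained purely from (\ref{eq2-8}): one comparing $L\Lambda_X^{-1}\Lambda_Y^{-1}$ with $\Lambda_X^{-1}\Lambda_Y^{-1}L$, and the relation $L\bigl(Y\Lambda_X^{-1}\Lambda_Y^{-1}\bigr) = \bigl(Y\Lambda_X^{-1}\Lambda_Y^{-1}\bigr)L + \Lambda_X^{-1}\Lambda_Y^{-1}$, whose inhomogeneous term is produced by $[\partial_Y, Y]_q = 1$. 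Assembling these and collecting terms, every operator carrying a surviving $\partial$ and every term containing a factor $Y$ cancels, and what remains is a scalar multiple of $\Lambda_X^{-1}\Lambda_Y^{-1}$ with coefficient $-\tfrac{q^{-1}}{1-q} + \tfrac{q^{-2}}{1-q} - q^{-2}$; a short calculation shows this equals $0$. Notably, part (i) closes using only (\ref{eq2-8}).

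The proof involves no deep idea, so I expect the main obstacle to be purely bookkeeping: keeping the many powers of $q$ straight while normal-ordering, especially with the inverse shifts $\Lambda_X^{-1},\Lambda_Y^{-1}$ in (i) and the need to apply $\partial_X X$ and $\Lambda_X X$ in the correct order. I would guard against sign and exponent slips by testing both sides of each identity on the low monomials $1$, $X$, $Y$, and $XY$ as an independent check before trusting the general normal-ordered comparison.
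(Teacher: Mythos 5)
Your proposal is correct and is essentially the paper's own argument: both are direct verifications from the $q$-commutation relations (\ref{eq2-8}), with (\ref{eq2-7}) supplying the cancellation $\Lambda_X + (1-q)X\partial_X - 1 = 0$ at exactly the same point in part (ii), and part (i) closing on (\ref{eq2-8}) alone, just as you predict. The only difference is bookkeeping: the paper organizes the computation through the $q$-commutator $[A,B]_q = AB - qBA$ and then conjugates by $q^{-2}\Lambda_X^{-1}\Lambda_Y^{-1}$, whereas you normal-order both sides and compare term by term; your sub-identities and the vanishing of the residual coefficients all check out.
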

\begin{proof}
 (i) By $q$-commutation relations (\ref{eq2-8}), we have
 \begin{align*}
  &[q \partial_X\Lambda_Y + \partial_Y - 1, 
  \frac{1-q\Lambda_X\Lambda_Y}{1-q} - Y]_q \\
  &= [q \partial_X\Lambda_Y + \partial_Y, 
  \frac{1-q\Lambda_X\Lambda_Y}{1-q} - Y]_q
  - (1-q) \left(\frac{1-q\Lambda_X\Lambda_Y}{1-q} - Y\right) \\
  &= (q \partial_X\Lambda_Y + \partial_Y - 1) - (1-q)
  \left(\frac{1-q\Lambda_X\Lambda_Y}{1-q} - Y\right).
 \end{align*}
 We transpose the second term of the right-hand side to the left-hand
 side to obtain
 \begin{align*}
  &(q \partial_X\Lambda_Y + \partial_Y - q) 
  \left(\frac{1-q\Lambda_X\Lambda_Y}{1-q} - Y\right)
  - q \left(\frac{1-q\Lambda_X\Lambda_Y}{1-q} - Y\right)
  (q \partial_X\Lambda_Y + \partial_Y - 1) \\
  &= q \partial_X\Lambda_Y + \partial_Y - 1.
 \end{align*}
 Multiplying by the operator $q^{-2}\Lambda_X^{-1}\Lambda_Y^{-1}$
 from the left, we see that
 \begin{align*}
  &(q \partial_X\Lambda_Y + \partial_Y - 1) q^{-1}\Lambda_X^{-1}\Lambda_Y^{-1}
  \left(\frac{1-q\Lambda_X\Lambda_Y}{1-q} - Y\right) \\
  &\hspace{3cm}- q^{-1}\Lambda_X^{-1}\Lambda_Y^{-1} 
  \left(\frac{1-q\Lambda_X\Lambda_Y}{1-q} - Y\right)
  (q \partial_X\Lambda_Y + \partial_Y - 1) \\
  &= q^{-2}\Lambda_X^{-1}\Lambda_Y^{-1} 
  (q\partial_X\Lambda_Y + \partial_Y - 1),
 \end{align*}
 where we have used the identities
 \begin{displaymath}
  \Lambda_X^{-1}\partial_X = q\partial_X\Lambda_X^{-1} \quad \text{and}
  \quad \Lambda_Y^{-1}\partial_Y = q\partial_Y\Lambda_Y^{-1}.
 \end{displaymath}
 If we transpose the second term of the left-hand side to the
 right-hand side, we obtain the result. \\
 (ii) By a similar computation as in (i), we obtain
 \begin{align*}
  &[q \partial_X\Lambda_Y + \partial_Y - 1, 
  \frac{1-q\Lambda_X\Lambda_Y}{1-q} - X]_q \\
  &= (q \partial_X\Lambda_Y + \partial_Y - 1) - q(1-\Lambda_X)\Lambda_Y
  - (1-q)X(\partial_Y - 1) \\
  &= (1-(1-q)X)(q \partial_X\Lambda_Y + \partial_Y - 1).
 \end{align*}
 The second equality is due to (\ref{eq2-7}).
 From this, the desired identity is easily derived.
\end{proof}
\begin{lemma}
 \label{lem3-70}
 The mappings
 \begin{displaymath}
  q^{-2}\Lambda_X^{-1}\Lambda_Y^{-1}
  \left(\frac{1-q^{2}\Lambda_X\Lambda_Y}{1-q} - qY\right) \quad
  \text{and} \quad 
  \frac{1-q^{2}\Lambda_X\Lambda_Y}{1-q} - X
 \end{displaymath}
 from $\C[[X,Y]]$ to itself are injections.
\end{lemma}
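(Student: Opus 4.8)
The plan is to make both operators completely explicit on monomials and then exploit a triangular structure with respect to degree. Writing a general element of $\C[[X,Y]]$ as $f=\sum_{n,k\ge 0}a_{n,k}X^nY^k$, I would first record the elementary actions $\Lambda_X(X^nY^k)=q^nX^nY^k$, $\Lambda_Y(X^nY^k)=q^kX^nY^k$, hence $\Lambda_X\Lambda_Y(X^nY^k)=q^{n+k}X^nY^k$ and $\Lambda_X^{-1}\Lambda_Y^{-1}(X^nY^k)=q^{-n-k}X^nY^k$. Using $\frac{1-q^2q^{n+k}}{1-q}=[n+k+2]_q$, a direct computation gives for the second operator
\[
 \left(\frac{1-q^2\Lambda_X\Lambda_Y}{1-q}-X\right)(X^nY^k)=[n+k+2]_q\,X^nY^k-X^{n+1}Y^k,
\]
and for the first operator, after carefully tracking the powers of $q$ produced by $\Lambda_X^{-1}\Lambda_Y^{-1}$,
\[
 q^{-2}\Lambda_X^{-1}\Lambda_Y^{-1}\left(\frac{1-q^2\Lambda_X\Lambda_Y}{1-q}-qY\right)(X^nY^k)=q^{-n-k-2}\bigl([n+k+2]_q\,X^nY^k-X^nY^{k+1}\bigr).
\]

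The key structural observation is that each operator is the sum of an invertible diagonal part, which scales $X^nY^k$ by a nonzero multiple of $[n+k+2]_q$, and a part that strictly raises one of the two degrees (multiplication by $X$ in the first case, by $Y$ in the second). I would then run a minimal-degree argument. Suppose $f\ne 0$ lies in the kernel of the second operator, and let $n_0$ be the least integer with $a_{n_0,k}\ne 0$ for some $k$. Since multiplication by $X$ cannot produce an $X^{n_0}$-term from any $X^{n_0-1}$-term (all of which vanish by minimality), the coefficient of $X^{n_0}Y^k$ in the image is exactly $[n_0+k+2]_q\,a_{n_0,k}$. As this must vanish for every $k$ while $[n_0+k+2]_q\ne 0$, we obtain $a_{n_0,k}=0$ for all $k$, a contradiction. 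The same argument applied to the least $Y$-degree handles the first operator, the extra units $q^{-n-k-2}$ being harmless.

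The only point requiring genuine care is the nonvanishing of the diagonal entries $[n+k+2]_q$ for all $n,k\ge 0$, i.e.\ $[m]_q\ne 0$ for every $m\ge 2$. This is exactly the standing assumption that $q$ is not a root of unity (which is in fact forced by the framework, since the $q$-factorials $[n]_q!$ already appear in denominators of the generating series), and it certainly holds in the range $0<q<1$ relevant to multiple $q$-zeta values. I do not expect a substantive obstacle here: the modest bookkeeping is to verify the two monomial formulas above, after which injectivity is immediate from the triangular structure.
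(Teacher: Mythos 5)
Your proof is correct and takes essentially the same route as the paper's: both reduce injectivity to the triangular coefficient recursion in which the diagonal entry is $[n+k+2]_q$ and the off-diagonal part raises one degree, and conclude that the kernel is trivial (the paper first splits off the invertible factor $q^{-2}\Lambda_X^{-1}\Lambda_Y^{-1}$ and inducts on $k$ using $[n+k+2]_q\,a(n,k)-q[k]_q\,a(n,k-1)=0$, where you run an equivalent minimal-degree argument on the raised variable). Your explicit observation that one needs $[m]_q\neq 0$ for all $m\ge 2$ is a hypothesis the paper leaves implicit in its standing assumption on $q$.
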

\begin{proof}
 We prove only the first one. The second is similar.
 Since the mapping $q^{-2}\Lambda_X^{-1}\Lambda_Y^{-1}$ is an injection,
 we only have to show that the mapping
 \begin{displaymath}
  \frac{1-q^{2}\Lambda_X\Lambda_Y}{1-q} - qY
 \end{displaymath}
 is an injection. 
 This is obviously a linear mapping.
 We suppose that the formal power series
 \begin{displaymath}
  f(X,Y) = \sum_{n,k=0}^{\infty}a(n,k) \frac{X^nY^k}{[n]_q!\,[k]_q!}
  \in \C[[X,Y]]
 \end{displaymath}
 is in the kernel of the above operator.
 Then we have
 \begin{displaymath}
  [n+k+2]_q a(n,k) - q[k]_q a(n,k-1) = 0
 \end{displaymath}
 for any $n$, $k \in \N$. By induction on $k$, we see that
 $a(n,k)=0$ for any $n$, $k \in \N$. This completes the proof.
\end{proof}
\begin{theorem}
 \label{th3-80}
 For any multi-index $\bmu$, we have
 \begin{displaymath}
  (q\partial_X\Lambda_Y + \partial_Y - 1) G_{\bmu,\bmu^{*}}(X,Y) = 0.
 \end{displaymath}
\end{theorem}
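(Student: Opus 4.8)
The plan is to prove the identity by induction on the weight $m = |\bmu|$; write $\theta$ for the operator $q\partial_X\Lambda_Y + \partial_Y - 1$ appearing in the statement. For the base case $m = 1$ the only multi-index is $\bmu = (1)$, with $\bmu^{*} = (1)$, and here $c_{(1),(1)}(n,k) = [n]_q!\,[k]_q!/[n+k+1]_q!$. I would verify $\theta G_{(1),(1)} = 0$ directly: by (\ref{eq2-10}) this is equivalent to the recurrence $q^{k+1}c_{(1),(1)}(n+1,k) + c_{(1),(1)}(n,k+1) - c_{(1),(1)}(n,k) = 0$, which after clearing the common factor $[n]_q!\,[k]_q!/[n+k+2]_q!$ reduces to the elementary identity $q^{k+1}[n+1]_q + [k+1]_q = [n+k+2]_q$.

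The crux of the inductive step is a combinatorial observation that makes the two cases of Propositions \ref{prop3-40}--\ref{prop3-50} match the two cases of Lemma \ref{lem3-60}, and that guarantees that for the \emph{dual} pair $(\bmu,\bmu^{*})$ exactly one of them always applies. Indeed, since the smallest element of $\mathcal{S}_m(\bmu)$ is $\mu_1$, we have $\mu_1 \ge 2$ iff $1 \notin \mathcal{S}_m(\bmu)$; because $\mathcal{S}_m(\bmu^{*}) = \mathcal{S}_m(\bmu)^c$, this is equivalent to $1 \in \mathcal{S}_m(\bmu^{*})$, i.e. to $(\bmu^{*})_1 = 1$. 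Dually, $\mu_1 = 1$ iff $(\bmu^{*})_1 \ge 2$. Hence, writing $\bnu = \bmu^{*}$, the pair $(\bmu,\bnu)$ falls under case (i) precisely when $\mu_1 \ge 2$ and under case (ii) precisely when $\mu_1 = 1$, with no other possibility.

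Granting this, I would carry out the step uniformly. Suppose $|\bmu| = m > 1$ and the theorem holds in weight $m-1$; note ${}^{-}\!\bmu$ has weight $m-1$ and, by (\ref{eq3-40}), $({}^{-}\!\bmu)^{*} = {}^{-}\!(\bmu^{*})$. In the case $\mu_1 \ge 2$ (so $\nu_1 = 1$), Proposition \ref{prop3-50}(i) gives
\[
 q^{-1}\Lambda_X^{-1}\Lambda_Y^{-1}\left(\frac{1-q\Lambda_X\Lambda_Y}{1-q} - Y\right) G_{\bmu,\bmu^{*}} = G_{{}^{-}\!\bmu,({}^{-}\!\bmu)^{*}}.
\]
Applying $\theta$ to both sides, the right-hand side vanishes by the induction hypothesis, while Lemma \ref{lem3-60}(i) lets me push $\theta$ through the operator on the left, turning the left-hand side into
\[
 q^{-2}\Lambda_X^{-1}\Lambda_Y^{-1}\left(\frac{1-q^2\Lambda_X\Lambda_Y}{1-q} - qY\right)\theta G_{\bmu,\bmu^{*}}.
\]
Since this operator is injective by Lemma \ref{lem3-70}, I conclude $\theta G_{\bmu,\bmu^{*}} = 0$. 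The case $\mu_1 = 1$ is identical, using Proposition \ref{prop3-50}(ii), Lemma \ref{lem3-60}(ii) and the second injectivity statement of Lemma \ref{lem3-70}.

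Almost all of the analytic content is already packaged into the preceding lemmas, so the induction itself is short; the main obstacle is conceptual rather than computational. The step needing the most care is the combinatorial matching of the second paragraph: one must be certain that the reduction operator supplied by Proposition \ref{prop3-50} is exactly the one whose commutation with $\theta$ is recorded in Lemma \ref{lem3-60}, and that $\bmu^{*}$ never lands in a mismatched case. The relation (\ref{eq3-40}) is what keeps the induction closed, by ensuring that reducing $\bmu$ and dualizing commute so that the reduced pair is again of the form $(\text{index}, \text{its dual})$.
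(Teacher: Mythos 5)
Your proof is correct and follows essentially the same route as the paper: induction on the weight, with the inductive step combining Proposition \ref{prop3-50}, Lemma \ref{lem3-60}, the injectivity of Lemma \ref{lem3-70} and the compatibility (\ref{eq3-40}). The only differences are that you spell out the base-case recurrence $q^{k+1}[n+1]_q+[k+1]_q=[n+k+2]_q$ and the observation that $\mu_1\ge 2$ iff $(\bmu^{*})_1=1$, both of which the paper leaves implicit.
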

\begin{proof}
 The proof is by induction on $|\bmu|$.
 In the case $|\bmu|=1$ (i.e. $\bmu=(1)$), the theorem follows directly
 from 
 \begin{displaymath}
  G_{(1),(1)}(X,Y) = \sum_{n,k=0}^{\infty} \frac{X^nY^k}{[n+k+1]_q!}.
 \end{displaymath}
 Let $\bmu=(\muvec)$ be a multi-index with $|\bmu| \ge 2$.
 We put $\bmu^{*}=(\mu_1^{*},\ldots,\mu_r^{*})$.
 If $\mu_1 \ge 2$, noting $\mu_1^{*}=1$, we find that
 \begin{displaymath}
  q^{-2}\Lambda_X^{-1}\Lambda_Y^{-1}
  \left(\frac{1-q^2\Lambda_X\Lambda_Y}{1-q} - qY\right)
  (q\partial_X\Lambda_Y + \partial_Y - 1) G_{\bmu,\bmu^{*}}(X,Y) = 0
 \end{displaymath}
 from Lemma \ref{lem3-60} (i), Proposition \ref{prop3-50} (i),
 (\ref{eq3-40}) and
 the hypothesis of induction. According to Lemma \ref{lem3-70}, we have
 \begin{displaymath}
  (q\partial_X\Lambda_Y + \partial_Y - 1) G_{\bmu,\bmu^{*}}(X,Y) = 0.
 \end{displaymath}
 Also in the case $\mu_1 = 1$, we can argue in the same way.
 Therefore we have completed the proof.
\end{proof}
The following is the main result of this paper.
\begin{corollary}
 \label{cor3-90}
 Let $\bmu$ be a multi-index. Then we have
 \begin{displaymath}
  (\Delta_{q,k} a_{\bmu})(n) = c_{\bmu,\bmu^{*}}(n,k)
 \end{displaymath}
 for any $n$, $k \in \N$.
\end{corollary}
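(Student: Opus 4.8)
The plan is to prove the corollary by a uniqueness argument at the level of generating functions, reducing the stated identity to the single equality $F_{a_{\bmu}}(X,Y) = G_{\bmu,\bmu^{*}}(X,Y)$ in $\C[[X,Y]]$. By the definition (\ref{eq2-3}) of $F_a$ together with Corollary \ref{cor2-50}, the coefficient of $X^nY^k/([n]_q!\,[k]_q!)$ in $F_{a_{\bmu}}$ is exactly $(\Delta_{q,k}a_{\bmu})(n)$, while the corresponding coefficient of $G_{\bmu,\bmu^{*}}$ is by definition $c_{\bmu,\bmu^{*}}(n,k)$. Thus the desired formula is precisely the assertion that these two formal power series coincide, and it suffices to prove that.

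To establish the equality, I would set $H(X,Y) = F_{a_{\bmu}}(X,Y) - G_{\bmu,\bmu^{*}}(X,Y)$ and verify the two hypotheses of Lemma \ref{lem2-30}. First, both summands are annihilated by $q\partial_{X}\Lambda_{Y} + \partial_{Y} - 1$: for $F_{a_{\bmu}}$ this is (\ref{eq2-20}), and for $G_{\bmu,\bmu^{*}}$ this is exactly Theorem \ref{th3-80}. Since the operator is linear, $H$ also satisfies $(q\partial_{X}\Lambda_{Y} + \partial_{Y} - 1)H = 0$. Second, I would check the boundary condition $H(X,0)=0$: upon setting $Y=0$, the coefficient of $X^n/[n]_q!$ in $F_{a_{\bmu}}(X,0)$ is $(\Delta_{q,0}a_{\bmu})(n)=a_{\bmu}(n)$ because $\Delta_{q,0}$ is the identity, whereas that of $G_{\bmu,\bmu^{*}}(X,0)$ is $c_{\bmu,\bmu^{*}}(n,0)$, and these agree by (\ref{eq3-20}). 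Hence $H(X,0)=0$.

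Applying Lemma \ref{lem2-30} to $H$ then forces $H=0$, that is $F_{a_{\bmu}}=G_{\bmu,\bmu^{*}}$, and comparing the coefficients of $X^nY^k/([n]_q!\,[k]_q!)$ yields $(\Delta_{q,k}a_{\bmu})(n)=c_{\bmu,\bmu^{*}}(n,k)$ for all $n,k\in\N$, as claimed.

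Since all of the genuine work has already been absorbed into Theorem \ref{th3-80} (which itself rests on the inductive relations of Proposition \ref{prop3-50} and the intertwining identities of Lemma \ref{lem3-60}), I do not expect a real obstacle at this stage; the corollary is essentially a bookkeeping step. The only points that merit minor care are confirming that $F_{a_{\bmu}}$ and $G_{\bmu,\bmu^{*}}$ are compared with respect to the same normalization $X^nY^k/([n]_q!\,[k]_q!)$, and that the annihilation identity and the boundary identity are invoked in exactly the forms required by Lemma \ref{lem2-30}. Once these are in place, uniqueness closes the argument immediately.
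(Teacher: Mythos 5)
Your argument is correct and is essentially identical to the paper's own proof: the paper likewise deduces $F_{a_{\bmu}} = G_{\bmu,\bmu^{*}}$ from the boundary agreement (\ref{eq3-20}), the annihilation identities (\ref{eq2-20}) and Theorem \ref{th3-80}, and the uniqueness Lemma \ref{lem2-30}, then reads off the corollary by comparing coefficients. You have simply written out the linearity and coefficient-matching steps that the paper leaves implicit.
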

\begin{proof}
 By (\ref{eq3-20}), we have $F_{a_{\bmu}}(X,0) = G_{\bmu,\bmu^{*}}(X,0)$.
 (The formal power series $F_a(X,Y)$ is defined in (\ref{eq2-3}) 
 for any sequence $a \in \C^{\N}$.)
 Therefore we obtain 
 \begin{displaymath}
  F_{a_{\bmu}}(X,Y) = G_{\bmu,\bmu^{*}}(X,Y)
 \end{displaymath}
 from Lemma \ref{lem2-30}, (\ref{eq2-20}) and Theorem \ref{th3-80}.
 This implies the corollary.
\end{proof}
As a corollary of Corollary \ref{cor3-90}, we obtain the duality for
finite multiple harmonic $q$-series due to Bradley.
\begin{corollary}
 For any multi-index $\bmu$, we have
 \begin{displaymath}
  \nabla_q a_{\bmu} = b_{\bmu^{*}}.
 \end{displaymath}
\end{corollary}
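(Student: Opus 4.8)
The plan is to prove this identity of sequences termwise, since the substantive work is already done: $\nabla_q$ is by definition the diagonal extraction $(\nabla_q a)(n) = (\Delta_{q,n}a)(0)$, so the corollary is just the boundary specialization $n=0$ of the main result, read off one index at a time.

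First I would fix an arbitrary $n \in \N$ and unwind the definition of $\nabla_q$ to write $(\nabla_q a_{\bmu})(n) = (\Delta_{q,n}a_{\bmu})(0)$. Next I would apply Corollary \ref{cor3-90} with its first argument set to $0$ and its difference order set to $n$, obtaining $(\Delta_{q,n}a_{\bmu})(0) = c_{\bmu,\bmu^{*}}(0,n)$. Finally I would invoke the boundary identity (\ref{eq3-30}), namely $c_{\bmu,\bmu^{*}}(0,k) = b_{\bmu^{*}}(k)$, at $k=n$, to conclude $(\nabla_q a_{\bmu})(n) = b_{\bmu^{*}}(n)$. Since $n$ was arbitrary, the two sequences agree in every entry, which is precisely $\nabla_q a_{\bmu} = b_{\bmu^{*}}$.

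I do not anticipate any genuine obstacle: the content lies entirely in Corollary \ref{cor3-90} (and hence in Theorem \ref{th3-80}, Proposition \ref{prop2-40}, and Lemma \ref{lem2-30}), while (\ref{eq3-30}) is the elementary consequence of (\ref{eq3-10}) already recorded after Definition \ref{def3-30}. The only point demanding a moment's care is the index bookkeeping, matching the slot that carries the difference order in Corollary \ref{cor3-90} with the slot appearing in the definition of $\nabla_q$, and correspondingly reading (\ref{eq3-30}) with the first argument held at $0$ while the second runs. Once that alignment is checked, the chain of three equalities closes immediately.
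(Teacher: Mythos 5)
Your proof is correct and is exactly the paper's argument: the author likewise obtains the result by setting the first argument to $0$ in Corollary \ref{cor3-90} and invoking (\ref{eq3-30}). Your extra care with the index bookkeeping (the difference order in Corollary \ref{cor3-90} playing the role of the running index of $\nabla_q$) is sound and matches the intended reading.
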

\begin{proof}
 Since we have (\ref{eq3-30}), the corollary follows 
 from Corollary \ref{cor3-90} on setting $n=0$.
\end{proof}

\begin{flushleft}
 Graduate School of Mathematics \\
 Nagoya University \\
 Chikusa-ku, Nagoya 464-8602, Japan \\
 E-mail: m02009c@math.nagoya-u.ac.jp
\end{flushleft}
\end{document}